\newif\ifdetails
\newcommand{\DETAIL}[1]%
{\ifdetails\par\fbox{\begin{minipage}{0.9\linewidth}\textit{Detail:}
      #1\end{minipage}}\par\fi}
\newcommand{\TODO}[1]%
{\ifdetails\par\fbox{\begin{minipage}{0.9\linewidth}\textbf{TODO:}
      #1\end{minipage}}\par\fi}
\newtheorem{lemma}{Lemma}
\newtheorem{proposition}[lemma]{Proposition}
\newtheorem{theorem}[lemma]{Theorem}
\theoremstyle{remark}
\newcommand{\old}[1]{{}}
\title{Leaf-induced subtrees of leaf-Fibonacci trees}
\author{Audace A. V. Dossou-Olory}
\thanks{The author was supported by Stellenbosch University in association with African Institute for Mathematical Science (AIMS) South Africa.}
\address{Audace A. V. Dossou-Olory \\ Department of Mathematical Sciences \\ Stellenbosch University \\ Private Bag X1, Matieland 7602 \\ South Africa}
\email{audace@aims.ac.za}
\subjclass[2010]{Primary 05C05; secondary 05C30}
\keywords{leaf-induced subtrees, nonisomorphic subtrees, leaf-Fibonacci trees}
\begin{document}

\begin{abstract}
In analogy to a concept of Fibonacci trees, we define the leaf-Fibonacci tree of size $n$ and investigate its number of nonisomorphic leaf-induced subtrees. Denote by $f_0$ the one vertex tree and $f_1$ the tree that consists of a root with two leaves attached to it; the leaf-Fibonacci tree $f_n$ of size $n\geq 2$ is the binary tree whose branches are $f_{n-1}$ and $f_{n-2}$. We derive a nonlinear difference equation for the number $\text{N}(f_n)$ of nonisomorphic leaf-induced subtrees (subtrees induced by leaves) of $f_n$, and also prove that $\text{N}(f_n)$ is asymptotic to $1.00001887227319\ldots (1.48369689570172 \ldots)^{\phi^n}$ ($\phi=$~golden ratio) as $n$ grows to infinity.
\end{abstract}

\maketitle

\section{Introduction}

Fibonacci trees are an alternative approach to a binary search in computer science and information processing~\cite[p.~417]{knuth1997art}. The Fibonacci tree of order $n$ is defined as the binary tree whose left branch is the Fibonacci tree of order $n-1$ and right branch is the Fibonacci tree of order $n-2$, while the Fibonacci tree of order $0$ or $1$ is the tree with only one vertex~\cite{knuth1997art}. We show in Figure~\ref{Fft5} the Fibonacci tree of order $5$.
\begin{figure}[H]\centering
\begin{tikzpicture}[thick, level distance=8mm]
\tikzstyle{level 1}=[sibling distance=36mm]
\tikzstyle{level 2}=[sibling distance=18mm]
\tikzstyle{level 3}=[sibling distance=10mm]
\tikzstyle{level 4}=[sibling distance=6mm]
\node [circle,draw]{}
child {child {child {child {[fill] circle (2pt)}child {[fill] circle (2pt)}}child {[fill] circle (2pt)}}child {child {[fill] circle (2pt)}child {[fill] circle (2pt)}}}
child {child {child {[fill] circle (2pt)}child {[fill] circle (2pt)}}child {[fill] circle (2pt)}};
\end{tikzpicture}
\caption{The Fibonacci tree of order $5$.}\label{Fft5}
\end{figure}

Thus, the Fibonacci tree of order $n$ has precisely $F_{n+1}$ leaves (so $2F_{n+1} -1$ vertices), where $F_n$ denotes the $n$-th Fibonacci number:
\begin{align*}
F_0=0,~F_1=1,~F_n=F_{n-1}+F_{n-2}~~\text{for}~~n>1\,.
\end{align*}

Fibonacci trees are also a special case of so-called AVL (``Adel'son-Vel'skii and Landis''---named after the inventors) trees~\cite{adelsonalgorithmfor}; these trees have the defining property that for every internal vertex $v$, the heights (i.e., the greatest distance of a leaf from the root) of the left and right branches of the subtree rooted at $v$ (consisting of $v$ and all its descendants) differ by at most one. According to~\cite{adelsonalgorithmfor}, AVL trees are the first data structure to be invented. Figure~\ref{AVLh3} shows an AVL tree of height $3$. For more information on Fibonacci trees and their uses, we refer to~\cite{stevens1974patterns,horibe1982entropy,horibe1983notes,grimaldi1991properties}.
\begin{figure}[H]\centering
\begin{tikzpicture}[thick, level distance=9mm]
\tikzstyle{level 1}=[sibling distance=21mm]
\tikzstyle{level 2}=[sibling distance=10mm]
\tikzstyle{level 3}=[sibling distance=6mm]
\node [circle,draw]{}
child {child {[fill] circle (2pt)} child {edge from parent[draw=none]}}
child {child {child {edge from parent[draw=none]} child {[fill] circle (2pt)}} child {[fill] circle (2pt)}};
\end{tikzpicture}
\caption{An AVL tree of height $3$.}\label{AVLh3}
\end{figure}
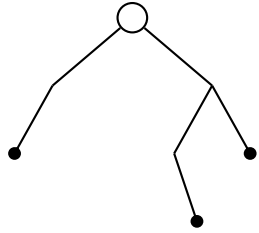

\medskip
In analogy to the concept of Fibonacci trees from~\cite{knuth1997art}, we define the \textit{leaf-Fibonacci tree of size (height) $n$} as follows:
\begin{itemize}
\item Denote by $f_0$ the tree with only one vertex and $f_1$ the tree that consists of a root with two leaves attached to it;
\item For $n\geq 2$, connect the roots of the trees $f_{n-1}$ and $f_{n-2}$ to a new common vertex to obtain the tree $f_n$.
\end{itemize}
In other words, the leaf-Fibonacci tree $f_n$ of size $n\geq 2$ is the binary tree  whose branches are the leaf-Fibonacci trees $f_{n-1}$ and $f_{n-2}$. Hence, $f_n$ has precisely $F_{n+2}$ leaves, where $F_n$ is the $n$-th Fibonacci number ($F_1=1,F_2=1,F_3=2,F_4=3,F_5=5,F_6=8,F_7=13,\ldots$). Figure~\ref{Lft5} shows the leaf-Fibonacci tree of size $5$.
\begin{figure}[H]\centering
\begin{tikzpicture}[thick, level distance=8mm]
\tikzstyle{level 1}=[sibling distance=50mm]
\tikzstyle{level 2}=[sibling distance=25mm]
\tikzstyle{level 3}=[sibling distance=13mm]
\tikzstyle{level 4}=[sibling distance=6mm]
\tikzstyle{level 5}=[sibling distance=5mm]
\node [circle,draw]{}
child {child {child {[fill] circle (2pt)}child {[fill] circle (2pt)}}child {child {[fill] circle (2pt)}child {child {[fill] circle (2pt)}child {[fill] circle (2pt)}}}}
child {child {child {[fill] circle (2pt)}child {child {[fill] circle (2pt)}child {[fill] circle (2pt)}}}child {child {child {[fill] circle (2pt)}child {[fill] circle (2pt)}}child {child {[fill] circle (2pt)}child {child {[fill] circle (2pt)}child {[fill] circle (2pt)}}}}};
\end{tikzpicture}
\caption{The leaf-Fibonacci tree of size $5$.}\label{Lft5}
\end{figure}
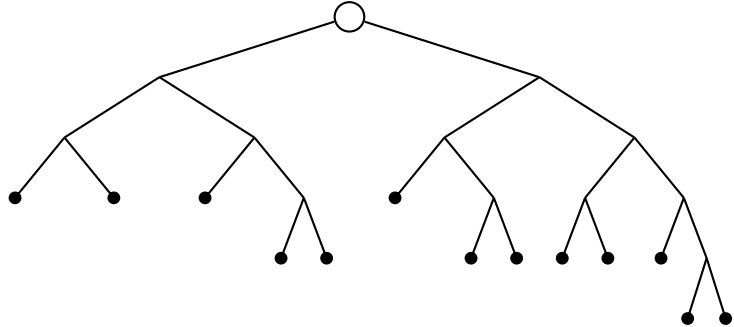

In this note, we shall be interested in the number of nonisomorphic subtrees induced by leaves (henceforth, \textit{leaf-induced subtrees}) of a leaf-Fibonacci tree of size $n$. 

\medskip
Let $T$ be a rooted tree without vertices of outdegree $1$ (also known as \textit{topological} or \textit{series-reduced} or \textit{homeomorphically irreducible} trees~\cite{bergeron1998combinatorial,allman2004mathematical,dossou2018inducibility,oeis}). Every choice of $k$ leaves of $T$ induces another topological tree, which is obtained by extracting the minimal subtree of $T$ that contains all the $k$ leaves and suppressing (if any) all vertices of outdegree $1$; see Figure~\ref{leaf-induced} for an illustration. 
\begin{figure}[htbp]\centering  
\begin{tikzpicture}[thick]

\node [circle,draw] (rt) at (-2.5,1.5) {};
\draw (rt) -- (-3.5,-4);
\draw (rt) -- (0,0);

\draw (0,0) -- (-2,-2);
\draw (0,0) -- (0,-4);
\draw (0,0) -- (2,-2);
\draw (-2,-2) -- (-2.5,-4);
\draw (-2,-2) -- (-1.5,-4);
\draw (2,-2) -- (1,-4);
\draw (2,-2) -- (2,-4);
\draw (2,-2) -- (3,-4);
\draw (1.25,-3.5) -- (1.5,-4);

\node [fill,rectangle, inner sep = 3.5pt ] at (0,0) {};
\node [fill,circle, inner sep = 2pt ] at (-3.5,-4) {};
\node [fill,circle, inner sep = 2pt ] at (-2.5,-4) {};
\node [fill,circle, inner sep = 2pt ] at (-1.5,-4) {};
\node [fill,circle, inner sep = 2pt ] at (0,-4) {};
\node [fill,circle, inner sep = 2pt ] at (1,-4) {};
\node [fill,circle, inner sep = 2pt ] at (1.5,-4) {};
\node [fill,circle, inner sep = 2pt ] at (2,-4) {};
\node [fill,circle, inner sep = 2pt ] at (3,-4) {};

\node at (-2.5,-4.5) {$\ell_1$};
\node at (0,-4.5) {$\ell_2$};
\node at (1,-4.5) {$\ell_3$};
\node at (2,-4.5) {$\ell_4$};

\node [fill,rectangle, inner sep = 3.5pt, draw] (r1) at (6,-2) {};

\draw (r1) -- (5,-4);
\draw (r1) -- (6,-4);
\draw (r1) -- (7,-4);
\draw (6.75,-3.5)--(6.5,-4);

\node [fill,circle, inner sep = 2pt ] at (5,-4) {};
\node [fill,circle, inner sep = 2pt ] at (6,-4) {};
\node [fill,circle, inner sep = 2pt ] at (6.5,-4) {};
\node [fill,circle, inner sep = 2pt ] at (7,-4) {};

\node at (5,-4.5) {$\ell_1$};
\node at (6,-4.5) {$\ell_2$};
\node at (6.5,-4.5) {$\ell_3$};
\node at (7,-4.5) {$\ell_4$};
\end{tikzpicture}
\caption{A topological tree (on the left) and the subtree induced by the leaves $\ell_1,\ell_2,\ell_3,\ell_4$ (on the right).}\label{leaf-induced}
\end{figure}
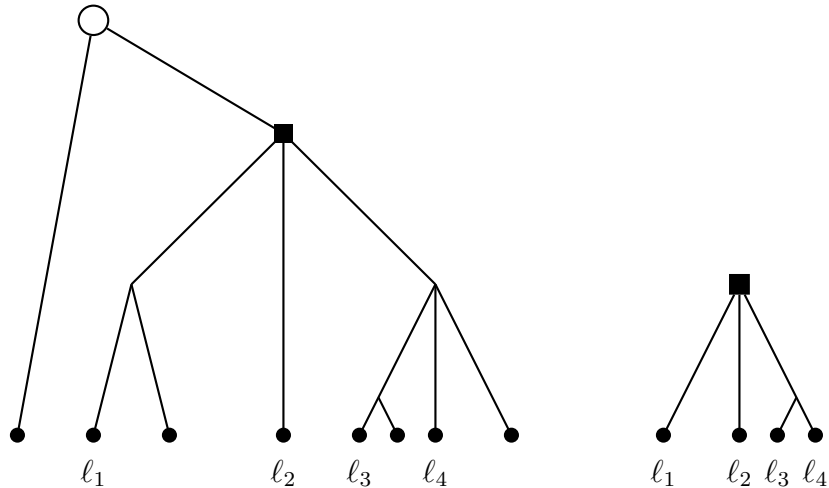
Every subtree obtained through this operation is sometimes referred to as a \textit{leaf-induced subtree}~\cite{czabarka2018inducibility,dossou2018inducibility}. We note the study of subtrees induced by leaves of binary trees finds a noteworthy relevance in phylogenetics---see Semple and Steel's book~\cite{semple2003phylogenetics} which describes the mathematical foundations of phylogenetics.

\medskip
Two rooted trees are said to be isomorphic if there is a graph isomorphism (preserving adjacency) between them that maps the root of one to the root of the other. It is important to note that the problem of enumerating leaf-induced subtrees becomes trivial if isomorphisms are not taken into account: in fact, it is clear that every topological tree with $n$ leaves has exactly $2^n -1$ leaf-induced subtrees.

We mention that nonisomorphic leaf-induced subtrees of a topological tree have been studied only very recently: Wagner and the present author~\cite{leafSubtree} obtained exact and asymptotic enumeration results on the number of nonisomorphic leaf-induced subtrees of two classes of $d$-ary trees, namely so-called $d$-ary caterpillars and even $d$-ary trees. In~\cite{leafSubtree}, they also derived extremal results for the number of root containing leaf-induced subtrees of a topological tree.

\medskip
We shall denote the number of nonisomorphic leaf-induced subtrees of the leaf-Fibonacci tree $f_n$ by $\text{N}(f_n)$. Our main results are a recurrence relation and an asymptotic formula for $\text{N}(f_n)$. As it turns out, the plan to compute $\text{N}(f_n)$ will be to consider the number of root containing leaf-induced subtrees of $f_n$.

\medskip
In~\cite{wagner2007fibonacci}, Wagner alone studied the number of independent vertex subsets (set of vertices containing no pair of adjacent vertices) of a Fibonacci tree of order $n$ with the notable difference that in his context, the Fibonacci tree of order $0$ has no vertices. Wagner derived a system of recurrence relations for the number of independent vertex subsets of a Fibonacci tree of an arbitrary order $n$, and also proved that there are positive constants $A,B>0$ such that the number of independent vertex subsets of a Fibonacci tree of order $n$ is asymptotic to $A\cdot B^{F_n}$ as $n$ grows to infinity. In the present study, we obtain a similar asymptotic formula for the number $\text{N}(f_n)$ of nonisomorphic leaf-induced subtrees of the leaf-Fibonacci tree of size $n$: we demonstrate---not expectedly---that for some effectively computable constants $A_1,A_2>0$,
\begin{align*}
\text{N}(f_n) \thicksim A_1 \cdot A_2^{F_n}~~\text{as}~~n \to \infty\,.
\end{align*}

\section{Main results} 

We note from the recursive definition of the tree $f_n$ that $f_m$ is a leaf-induced subtree of $f_n$ for every $m \leq n$. However, not every leaf-induced subtree of $f_n$ is again a leaf-Fibonacci tree: in fact, by repeatedly removing leaves from $f_n$, one easily sees that $f_n$ has leaf-induced subtrees of every number of leaves $k$ between $1$ and $n$.

As mentionned in the introduction, the plan to compute $\text{N}(f_n)$ will be to consider the number of root containing leaf-induced subtrees of $f_n$.
\begin{lemma}\label{UsefulLem}
All nonisomorphic leaf-induced subtrees with two or more leaves of $f_n$ can be identified as containing the root of $f_n$.
\end{lemma}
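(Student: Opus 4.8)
The plan is to prove a slightly more flexible statement that exposes the recursive structure: every leaf-induced subtree of $f_n$ with at least two leaves is isomorphic to a root containing leaf-induced subtree of some $f_j$ with $j\le n$, and every root containing leaf-induced subtree with at least two leaves of $f_j$ is isomorphic to one of $f_{j+1}$; iterating the second statement from $j$ up to $n$ then gives the lemma. Throughout I would use two elementary facts coming straight from the recursive definition: (i) the subtree of $f_n$ rooted at any of its vertices is again some leaf-Fibonacci tree $f_j$ with $j\le n$ (a single vertex being $f_0$); and (ii) for each $m\ge 1$ the tree $f_{m-1}$ is a branch of $f_m$, so every leaf-induced subtree of $f_{m-1}$ is also a leaf-induced subtree of $f_m$ (with the convention that $f_0$, a single vertex, is a leaf-induced subtree of $f_1$).

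First, the reduction. Let $S$ be a leaf-induced subtree of $f_n$ cut out by leaves $\ell_1,\dots,\ell_k$ with $k\ge 2$, and let $v$ be their lowest common ancestor in $f_n$. Because $k\ge 2$, the vertex $v$ has outdegree at least $2$ in the minimal subtree of $f_n$ spanned by $\ell_1,\dots,\ell_k$, hence $v$ is not suppressed and is the root of $S$. By fact (i) the subtree of $f_n$ rooted at $v$ is some $f_j$ (with $j\ge 1$, since $f_0$ has a single leaf), and it contains all of $\ell_1,\dots,\ell_k$; thus $S$ is a root containing leaf-induced subtree of $f_j$.

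Next, the lifting step, which is the core of the argument: if $R$ is a root containing leaf-induced subtree of $f_j$ with at least two leaves, then $R$ is isomorphic to a root containing leaf-induced subtree of $f_{j+1}$. The cases $j\le 1$ are immediate ($f_1$ itself is the only candidate, and it is realized inside $f_2$ by picking one leaf from each branch). For $j\ge 2$, write $f_j$ with branches $f_{j-1}$ and $f_{j-2}$; since $R$ contains the (unsuppressed) root of $f_j$, it is a root with two branches $A$ and $B$, where $A$ is the leaf-induced subtree of $f_{j-1}$ and $B$ the leaf-induced subtree of $f_{j-2}$ carved out by the chosen leaves (a lone leaf if only one was chosen on that side). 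Now view $f_{j+1}$ as having branches $f_j$ and $f_{j-1}$. By fact (ii), $A$ can be re-realized inside the branch $f_j$ of $f_{j+1}$ and $B$ inside the branch $f_{j-1}$ of $f_{j+1}$; selecting the corresponding leaves of $f_{j+1}$, the two paths from the roots of these copies to the root of $f_{j+1}$ consist only of outdegree-$1$ vertices and are suppressed, so the resulting leaf-induced subtree is precisely a root with branches $A$ and $B$, i.e.\ an isomorphic copy of $R$ containing the root of $f_{j+1}$.

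Finally, combining the two steps: a leaf-induced subtree $S$ of $f_n$ with at least two leaves is a root containing leaf-induced subtree of some $f_j$ with $j\le n$ by the reduction, and applying the lifting step $n-j$ times turns it into an isomorphic root containing leaf-induced subtree of $f_n$. I expect the lifting step to be the only genuine difficulty, and within it the subtle point is the vertex-suppression bookkeeping — one must verify that re-embedding $A$ and $B$ into the two branches of $f_{j+1}$ reconstructs $R$ exactly, neither creating nor destroying an outdegree-$1$ vertex — together with spelling out the degenerate cases where $A$ or $B$ is a single leaf and the base cases $f_0,f_1$.
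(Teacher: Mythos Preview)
Your proof is correct, and the lifting step in particular is carefully argued. The organisation, however, differs from the paper's. The paper proves the lemma by a single double induction on $n$ and $k$: for a set of $k\ge 2$ leaves of $f_n$ it distinguishes the three cases where all leaves lie in the $f_{n-1}$ branch, all in the $f_{n-2}$ branch, or are split between the two, and in each case invokes the induction hypothesis directly. You instead decouple the argument into two pieces: a \emph{reduction} via the lowest common ancestor (using that every rooted subtree of $f_n$ is some $f_j$) to land in a root-containing situation inside a smaller $f_j$, followed by an explicit \emph{lifting} $f_j\to f_{j+1}$ (re-embedding the two branches $A,B$ into $f_j$ and $f_{j-1}$) which you then iterate. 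Your approach is more modular and makes the key mechanism --- that a root-containing subtree of $f_j$ can be rebuilt one level up --- completely explicit, including the suppression bookkeeping; the paper's argument is shorter but leaves that step somewhat implicit (its appeal to ``induction on $k$'' to place $f_{n-1}$ at the root of $f_n$ is a bit terse). Both ultimately rest on the same structural fact (ii), namely that $f_{m-1}$ sits inside $f_m$ as a branch.
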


\begin{proof}
The tree $f_0$ has only one vertex which is also its leaf and root, so the statement holds vacuously for $n=0$. The statement is trivial for $n=1$ ($f_1$ is the only leaf-induced subtree in this case). Let $n>1$ and consider a subset of $k>1$ leaves of $f_n$. We argue by double induction on $n$ and $k$:
\begin{itemize}
\item If all $k$ leaves belong to $f_{n-1}$ then by the induction hypothesis on $n$, the induced subtree with $k$ leaves contains the root of $f_{n-1}$. Moreover, by the induction hypothesis on $k$, the tree $f_{n-1}$ can be identified as containing the root of $f_n$ (as $f_{n-1}$ is clearly a leaf-induced subtree of $f_n$). Hence, the induced subtree with $k$ leaves contains the root of $f_n$.
\item If all $k$ leaves belong to $f_{n-2}$, then we also deduce by the induction hypothesis that the induced subtree with $k$ leaves is a root containing leaf-induced subtree of $f_n$.
\item If $k_1$ leaves belong to $f_{n-1}$ and $k-k_1$ leaves belong to $f_{n-2}$, then by the induction hypothesis, the induced subtrees with $k_1$ and $k-k_1$ leaves are root containing leaf-induced subtrees of $f_{n-1}$ and $f_{n-2}$, respectively. Consequently, the root of the induced subtree with $k$ leaves coincides with the root of $f_n$.
\end{itemize}
This completes the induction step as well as the proof of the lemma. 
\end{proof}

\medskip
We then obtain the following proposition:

\begin{proposition}
The number $\text{N}(f_n)$ of nonisomorphic leaf-induced subtrees of the leaf-Fibonacci tree $f_n$ satisfies the following nonlinear recurrence relation:
\begin{align}\label{EqLFibTree}
\text{N}(f_n)=1 + \frac{1}{2}\text{N}(f_{n-2}) -\frac{1}{2}\text{N}(f_{n-2})^2 + \text{N}(f_{n-2}) \cdot \text{N}(f_{n-1}) 
\end{align}
with initial values $\text{N}(f_0)=1,\text{N}(f_1)=2$.
\end{proposition}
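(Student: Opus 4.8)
The plan is to combine Lemma~\ref{UsefulLem} with the recursive branch structure of $f_n$, turning the count into a count of unordered pairs of smaller leaf-induced subtrees. First I would write $\text{N}(f_n)=1+R(f_n)$, where $R(f_n)$ is the number of isomorphism classes of leaf-induced subtrees of $f_n$ with at least two leaves. The summand $1$ records the one-vertex tree; by Lemma~\ref{UsefulLem} every remaining class has a representative containing the root of $f_n$, and since the root of $f_n$ has outdegree $2$, this representative has a root of outdegree $2$ as well.

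Next I would set up the decomposition. Fix $n\ge 2$, so the branches of $f_n$ are $f_{n-1}$ and $f_{n-2}$, whose leaf sets partition the leaves of $f_n$. If a set $S$ of leaves of $f_n$ induces a subtree containing the root of $f_n$, then $S$ must meet the leaf set of each branch (otherwise the root of $f_n$ acquires outdegree $1$ and is suppressed), and the induced subtree is exactly the tree obtained by attaching a new root to the roots of the subtree induced by $S$ in $f_{n-1}$ and the subtree induced by $S$ in $f_{n-2}$, where a one-vertex subtree is permitted on either side. Conversely, any such attachment is realized by a suitable $S$. Because the trees are non-plane, the resulting isomorphism class depends only on the unordered pair $\{A,B\}$ of isomorphism classes of the two branches, and one checks this gives a bijection onto the $R(f_n)$ classes (from the tree one recovers the unordered pair of branches). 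Writing $\mathcal{A}$ and $\mathcal{B}$ for the sets of isomorphism classes of leaf-induced subtrees of $f_{n-1}$ and of $f_{n-2}$ respectively, \emph{including} the one-vertex tree, we have $|\mathcal{A}|=\text{N}(f_{n-1})$ and $|\mathcal{B}|=\text{N}(f_{n-2})$, and $R(f_n)$ is the number of unordered pairs $\{A,B\}$ with $A\in\mathcal{A}$ and $B\in\mathcal{B}$ (equal entries allowed).

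The step I expect to be the main obstacle is the overlap between $\mathcal{A}$ and $\mathcal{B}$: since $f_{n-2}$ is a branch of $f_{n-1}$, every leaf-induced subtree of $f_{n-2}$ is also one of $f_{n-1}$, so $\mathcal{B}\subseteq\mathcal{A}$, and the pairs must be counted with this nesting in mind rather than as a naive product. Splitting admissible unordered pairs by how many entries lie in $\mathcal{B}$: pairs with both entries in $\mathcal{B}$ contribute $\binom{|\mathcal{B}|}{2}+|\mathcal{B}|$, pairs with exactly one entry in $\mathcal{B}$ (the other necessarily in $\mathcal{A}\setminus\mathcal{B}$) contribute $|\mathcal{B}|\,(|\mathcal{A}|-|\mathcal{B}|)$, and pairs with no entry in $\mathcal{B}$ are not admissible. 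Adding these and simplifying yields
\[
R(f_n)=\text{N}(f_{n-1})\,\text{N}(f_{n-2})+\tfrac{1}{2}\,\text{N}(f_{n-2})-\tfrac{1}{2}\,\text{N}(f_{n-2})^2,
\]
so that $\text{N}(f_n)=1+R(f_n)$ is precisely \eqref{EqLFibTree}. Finally I would verify the initial data directly: $f_0$ is a single vertex, whence $\text{N}(f_0)=1$, and $f_1$ is a root with two leaves, whose only leaf-induced subtrees up to isomorphism are the one-vertex tree and $f_1$ itself, whence $\text{N}(f_1)=2$; substituting these into \eqref{EqLFibTree} to recover $\text{N}(f_2)=3$ provides a consistency check.
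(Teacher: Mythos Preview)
Your proof is correct and follows essentially the same route as the paper: invoke Lemma~\ref{UsefulLem} to reduce $\text{N}(f_n)-1$ to a count of root-containing leaf-induced subtrees, observe that $\mathcal{B}\subseteq\mathcal{A}$, and then split the unordered branch-pairs into the case where both branches lie in $\mathcal{B}$ (giving $\binom{\text{N}(f_{n-2})+1}{2}$) versus exactly one in $\mathcal{B}$ and the other in $\mathcal{A}\setminus\mathcal{B}$ (giving $\text{N}(f_{n-2})\big(\text{N}(f_{n-1})-\text{N}(f_{n-2})\big)$). Your write-up is slightly more explicit than the paper's about why the leaf set must meet both branches and why the map to unordered pairs is a bijection, but the argument is the same.
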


\begin{proof}
It is obvious that $\text{N}(f_0)=1$ and $\text{N}(f_1)=2$. Let $n>1$. By Lemma~\ref{UsefulLem}, $\text{N}(f_n)$ is precisely one more the number of nonisomorphic root containing leaf-induced subtrees of $f_n$ (the subtree with only one vertex has been included). Since all leaf-induced subtrees of the leaf-Fibonacci tree $f_{n-2}$ are again leaf-induced subtrees of $f_{n-1}$, the nonisomorphic root containing leaf-induced subtrees of $f_n$ can be categorised by two types of enumeration:
\begin{itemize}
\item Both branches of the induced subtree are leaf-induced subtrees of $f_{n-2}$. The total number of these possibilities is $\binom{1+\text{N}(f_{n-2})}{2}$ as the induced subtrees have to be nonisomorphic.
\item One of the branches of the induced subtree is a leaf-induced subtree of $f_{n-2}$ while the other branch is a leaf-induced subtree of $f_{n-1}$ but does not belong to the set of leaf-induced subtrees of $f_{n-2}$. The total number of these possibilities is $\text{N}(f_{n-2}) (\text{N}(f_{n-1}) - \text{N}(f_{n-2}))$.
\end{itemize}
Therefore, we obtain
\begin{align*}
\text{N}(f_n)&=1 + \binom{1+\text{N}(f_{n-2})}{2} + \text{N}(f_{n-2}) (\text{N}(f_{n-1}) - \text{N}(f_{n-2}))\\
&= 1 + \frac{1}{2}\text{N}(f_{n-2}) -\frac{1}{2}\text{N}(f_{n-2})^2 + \text{N}(f_{n-2}) \cdot \text{N}(f_{n-1})\,,
\end{align*}
which completes the proof of the proposition.
\end{proof}

\medskip
The sequence $(\text{N}(f_n))_{n\geq 0}$ starts as
\begin{align*}
&\text{N}(f_0)=1,~\text{N}(f_1)=2,~\text{N}(f_2)=3,~\text{N}(f_3)=6,~\text{N}(f_4)=16,~\text{N}(f_5)=82,~\text{N}(f_6)=1193,\\
&\text{N}(f_7)=94506,~\text{N}(f_8)=112034631,\ldots
\end{align*}

\medskip
We remark that recursion~\eqref{EqLFibTree} cannot be solved explicitly. Therefore, finding an asymptotic formula should be in order. In the following theorem, we show---not expectedly---that $\text{N}(f_n)$ grows doubly exponentially in $n$.

\begin{theorem}
There are two positive constants $K_1,K_2>0$ (both solely depending on the first terms of $(\text{N}(f_n))_{n\geq 0}$) such that
\begin{align*}
\text{N}(f_n) =(1+o(1)) K_1 \cdot K_2^{\big(\frac{1+\sqrt{5}}{2}\big)^n}
\end{align*}
as $n\to \infty$.
\end{theorem}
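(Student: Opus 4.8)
The plan is to take logarithms. Writing $b_n:=\log\text{N}(f_n)$ and isolating the dominant product term in~\eqref{EqLFibTree}, the nonlinear recurrence turns into the \emph{linear} inhomogeneous Fibonacci recurrence
\begin{align*}
b_n=b_{n-1}+b_{n-2}+\delta_n\quad(n\ge 2),\qquad b_0=0,\ b_1=\log 2 ,
\end{align*}
where $\delta_n:=\log\!\bigl(\text{N}(f_n)/(\text{N}(f_{n-1})\text{N}(f_{n-2}))\bigr)$. Everything hinges on showing that $\delta_n$ decays \emph{doubly} exponentially, so that the correction it makes to the pure Fibonacci growth $b_n\approx F_n\log 2$ is negligible and can be summed in closed form.

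\emph{Step 1 (growth, and the size of $\delta_n$).} A short induction on~\eqref{EqLFibTree} shows that $(\text{N}(f_n))_{n\ge 1}$ is strictly increasing, hence tends to infinity; reinserting this gives $\tfrac12\text{N}(f_{n-1})\text{N}(f_{n-2})\le\text{N}(f_n)\le\text{N}(f_{n-1})\text{N}(f_{n-2})$ for $n\ge 3$, so $b_n$ grows at least like a Fibonacci sequence, in particular $b_n\ge c\,\phi^n$ for $n\ge n_0$ and some $c>0$, with $\phi=\tfrac{1+\sqrt5}{2}$. Writing $\text{N}(f_n)=\text{N}(f_{n-1})\text{N}(f_{n-2})(1+\varepsilon_n)$ one reads off from~\eqref{EqLFibTree} that $|\varepsilon_n|=O\!\bigl(\text{N}(f_{n-2})/\text{N}(f_{n-1})\bigr)=O\!\bigl(1/\text{N}(f_{n-3})\bigr)$ (using the lower bound once more), hence $|\delta_n|=|\log(1+\varepsilon_n)|=O\!\bigl(e^{-c\phi^{n-3}}\bigr)$. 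Consequently both series $\sum_{j\ge 2}\phi^{-j}|\delta_j|$ and $\sum_{j\ge 2}|\psi|^{-j}|\delta_j|$ converge, where $\psi=\tfrac{1-\sqrt5}{2}$; the second convergence is the crucial one, since $|\psi|^{-j}$ grows exponentially and is overcome only by the doubly exponential decay of $|\delta_j|$.

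\emph{Step 2 (solving the recurrence).} By variation of parameters against the homogeneous solutions $\phi^n,\psi^n$,
\begin{align*}
b_n=F_n\log 2+\sum_{j=2}^{n}F_{\,n-j+1}\,\delta_j ,
\end{align*}
$F_m$ denoting the $m$-th Fibonacci number. Inserting Binet's formula $F_m=(\phi^m-\psi^m)/\sqrt5$ and separating the two geometric contributions,
\begin{align*}
b_n=\frac{\phi^n}{\sqrt5}\Bigl(\log 2+\sum_{j=2}^{n}\phi^{1-j}\delta_j\Bigr)-\frac{\psi^n}{\sqrt5}\Bigl(\log 2+\sum_{j=2}^{n}\psi^{1-j}\delta_j\Bigr).
\end{align*}
By Step 1 the first bracket converges to $S_\phi:=\log 2+\sum_{j\ge 2}\phi^{1-j}\delta_j$, so the first summand equals $\alpha\phi^n$ minus a remainder $\tfrac{\phi^n}{\sqrt5}\sum_{j>n}\phi^{1-j}\delta_j=O(\sup_{j>n}|\delta_j|)$, with $\alpha:=S_\phi/\sqrt5>0$; this remainder is doubly exponentially small. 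The second bracket also converges (again by Step 1), and since $|\psi|<1$ the second summand tends to $0$. Hence $b_n=\alpha\phi^n+\gamma_n$ with $\gamma_n$ converging to a constant $\gamma$, and exponentiating gives $\text{N}(f_n)=(1+o(1))\,e^{\gamma}\,\bigl(e^{\alpha}\bigr)^{\phi^n}$ — the claimed formula with $K_1=e^{\gamma}>0$ and $K_2=e^{\alpha}>0$, both determined by the rapidly converging series in the $\delta_j$ (equivalently, by the initial values), hence effectively computable.

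The easy part of Step 2 is $b_n/\phi^n\to\alpha$, which already yields the weaker assertion $\text{N}(f_n)^{1/\phi^n}\to K_2$; the real work is upgrading this to the convergence of $b_n-\alpha\phi^n$, i.e.\ showing that the $\psi$-term and the discarded $\phi$-tail are genuinely $o(1)$ rather than of order $\phi^n$. This is where care is needed, and it rests entirely on the doubly exponential lower bound $\text{N}(f_n)\ge e^{c\phi^n}$ from Step 1: with only an exponential lower bound the series $\sum_{j\ge 2}|\psi|^{-j}|\delta_j|$ could diverge, the $\psi$-contribution would cease to be negligible, and the clean split $b_n=\alpha\phi^n+\gamma_n$ would fail. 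I would therefore nail down that lower bound first, with explicit constants $c$ and $n_0$, and only afterwards run the summation of Step 2.
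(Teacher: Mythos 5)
Your proposal is correct and follows essentially the same route as the paper: take logarithms to turn \eqref{EqLFibTree} into the perturbed Fibonacci recurrence $b_n=b_{n-1}+b_{n-2}+\delta_n$, solve it via Binet's formula (the paper does the equivalent eigendecomposition of the transfer matrix), and show the perturbation contributes a convergent correction. The only noteworthy difference is that you factor the $\psi$-part as $\psi^n\bigl(\log 2+\sum_j\psi^{1-j}\delta_j\bigr)$, which genuinely requires the doubly exponential decay of $\delta_j$ that you establish in Step~1 (the paper instead keeps this part as a convolution $\sum_j \delta_j\psi^{\,n-j+1}$ and gets by with $\delta_j\to 0$); both treatments are valid, and yours has the side benefit of making it transparent that $\gamma_n\to 0$, i.e.\ that the multiplicative constant $K_1$ is in fact equal to $1$.
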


\begin{proof}
For ease of notation, set $A_n:=\text{N}(f_n)$. Then we have
\begin{align*}
A_n=1 + \frac{1}{2}A_{n-2} -\frac{1}{2}A_{n-2}^2 + A_{n-2} \cdot  A_{n-1}
\end{align*}
with initial values $A_0=1,A_1=2$. Since the sequence $(A_n)_{n\geq 0}$ increases with $n$, it is not difficult to note that
\begin{align*}
A_n \geq \frac{1}{2}A_{n-1} \cdot A_{n-2}
\end{align*}
for all $n\geq 2$. Also, since $A_n\geq A_2=3$ for all $n\geq 2$ and $1+A_1/2-A_1^2/2=0$, it is not difficult to see that
\begin{align*}
A_n \leq A_{n-1} \cdot A_{n-2}
\end{align*}
for all $n\geq 3$. Thus, we have
\begin{align}\label{IntExp}
\lim_{n\to \infty} \frac{A_{n-1}}{A_n}=0\,,
\end{align}
which also implies that the sequence $(A_{n-1}/A_n)_{n\geq 1}$ is bounded for every $n\geq 1$. Let us use $Q_n$ as a shorthand for $\log(A_n)$ and $E_n$ as a shorthand for
\begin{align}\label{EnEp}
\log\Big(1+\frac{1}{2A_{n-1}} -\frac{A_{n-2}}{2A_{n-1}}+\frac{1}{A_{n-1}\cdot A_{n-2}}\Big)\,.
\end{align}
With these notations, we have
\begin{align*}
Q_n=Q_{n-1}+Q_{n-2}+E_n\,.
\end{align*}
By setting $R_{n-1}:=Q_{n-2}$, we obtain the following system (written in matrix form) of two linear difference equations:
\begin{align*}
\left(
\begin{array}{c}
Q_n \\ R_n
\end{array}
\right)=
\left(
\begin{array}{cc}
1 & 1 \\
1 & 0
\end{array}
\right)
\left(
\begin{array}{c}
Q_{n-1} \\ R_{n-1}
\end{array}
\right)+
\left(
\begin{array}{c}
E_n \\ 0
\end{array}
\right)\,.
\end{align*}
By iteration on $n$, one gets
\begin{align*}
\left(
\begin{array}{c}
Q_n \\ R_n
\end{array}
\right)=
\left(
\begin{array}{cc}
1 & 1 \\
1 & 0
\end{array}
\right)^{n-1}
\left(
\begin{array}{c}
Q_1 \\ Q_0
\end{array}
\right)+
\sum_{i=2}^n \left(
\begin{array}{cc}
1 & 1 \\
1 & 0
\end{array}
\right)^{n-i}
\left(
\begin{array}{c}
E_i \\ 0
\end{array}
\right)
\end{align*}
for all $n\geq 2$, as $R_1=Q_0$. The eigenvalue decomposition gives us
\begin{align*}
\left(\begin{array}{cc}
1 & 1 \\
1 & 0
\end{array}\right)=\frac{1}{\lambda_1 -\lambda_2}\left(
\begin{array}{cc}
\lambda_1 & \lambda_2 \\
1 & 1
\end{array}\right) \left(\begin{array}{cc}
\lambda_1 & 0 \\
0 & \lambda_2
\end{array}\right) \left(
\begin{array}{cc}
1 & -\lambda_2 \\
-1 & \lambda_1
\end{array}\right)
\end{align*}
with
\begin{align*}
\lambda_1=\frac{1-\sqrt{5}}{2} \quad \text{and} \quad \lambda_2=\frac{1+\sqrt{5}}{2}\,.
\end{align*}

It follows that
\begin{align*}
\left(\begin{array}{cc}
1 & 1 \\
1 & 0
\end{array}\right)^m&=\frac{1}{\lambda_1 -\lambda_2}\left(
\begin{array}{cc}
\lambda_1 & \lambda_2 \\
1 & 1
\end{array}\right) \left(\begin{array}{cc}
\lambda_1^m & 0 \\
0 & \lambda_2^m
\end{array}\right) \left(
\begin{array}{cc}
1 & -\lambda_2 \\
-1 & \lambda_1
\end{array}\right)\\
&=\frac{1}{\lambda_1 -\lambda_2} \left(
\begin{array}{cc}
\lambda_1^{m+1}-\lambda_2^{m+1}  & \lambda_1\cdot \lambda_2^{m+1}-\lambda_1^{m+1}\cdot \lambda_2 \\
\lambda_1^m-\lambda_2^m  & \lambda_1\cdot \lambda_2^m-\lambda_1^m\cdot \lambda_2
\end{array}\right) 
\end{align*}
for all integer values of $m$. Consequently, we have
\begin{align*}
\left(
\begin{array}{c}
Q_n \\ R_n
\end{array}
\right)=\frac{\log(2)}{\lambda_1 - \lambda_2} \left(
\begin{array}{c}
\lambda_1^n-\lambda_2^n \\ \lambda_1^{n-1}-\lambda_2^{n-1} 
\end{array}
\right) + \sum_{i=2}^n \frac{E_i}{\lambda_1 - \lambda_2} \left(
\begin{array}{c}
\lambda_1^{n-i+1}-\lambda_2^{n-i+1} \\ \lambda_1^{n-i}-\lambda_2^{n-i} 
\end{array}
\right)
\end{align*}
for all $n\geq 2$ as $Q_0=0$ and $Q_1=\log(2)$. Therefore, we obtain
\begin{align*}
Q_n=\frac{\log(2)}{\lambda_2 - \lambda_1}\big(\lambda_2^n-\lambda_1^n \big) + \frac{1}{\lambda_2 - \lambda_1} \sum_{i=2}^n E_i \big( \lambda_2^{n-i+1}-\lambda_1^{n-i+1}\big)
\end{align*}
for all $n\geq 2$. Since the sequence $(E_n)_{n\geq 2}$ is bounded for every $n\geq 2$ (as $\lim_{n\to \infty} E_n=0$ by virtue of~\eqref{EnEp} and \eqref{IntExp}), we derive that
\begin{align*}
\sum_{i=2}^n |E_i| \cdot |\lambda_1|^{n-i+1} \leq \frac{|\lambda_1|^n-|\lambda_1|}{|\lambda_1|-1}\cdot \sup_{2\leq m\leq n}|E_m|
\end{align*}
for all $n\geq 2$. This implies that the quantity
\begin{align*}
\frac{1}{\lambda_2 - \lambda_1} \sum_{i=2}^n E_i \cdot \lambda_1^{n-i+1}
\end{align*}
converges to a definite limit as $n\to \infty$ (note that $|\lambda_1|<1$ and $\sup_{2\leq m \leq n}|E_m|$ is finite for every $n\geq 2$). On the other hand, we have
\begin{align*}
0\leq \Big|\sum_{i=n+1}^{\infty} E_i \cdot \lambda_2^{n-i+1}\Big| \leq \frac{\lambda_2}{\lambda_2 -1} \cdot \sup_{m\geq n+1}|E_m|
\end{align*}
for all $n\geq 2$ (note that $\lambda_2>1$), which implies that
\begin{align*}
\frac{1}{\lambda_2 - \lambda_1}\sum_{i=n+1}^{\infty} E_i \cdot \lambda_2^{n-i+1}=\mathcal{O}\Big(\sup_{m\geq n+1}|E_m| \Big)=o(1)
\end{align*} 
as $n\to \infty$. Putting everything together, we arrive at
\begin{align*}
Q_n&=\frac{\lambda_2^n}{\lambda_2 - \lambda_1} \Big( \log(2) +\sum_{i=2}^{\infty} E_i \cdot \lambda_2^{-i+1} \Big)-\frac{1}{\lambda_2 - \lambda_1} \sum_{i=2}^n E_i \cdot \lambda_1^{n-i+1}\\
& +\mathcal{O}\Big(\sup_{m\geq n+1}|E_m| \Big)+ \mathcal{O}(\lambda_1^n) \\
&=\frac{\lambda_2^n}{\lambda_2 - \lambda_1} \Big( \log(2) +\sum_{i=2}^{\infty} E_i \cdot \lambda_2^{-i+1} \Big)-\frac{1}{\lambda_2 - \lambda_1} \sum_{i=2}^n E_i \cdot \lambda_1^{n-i+1}+o(1)
\end{align*}
as $n\to \infty$. We deduce that
\begin{align*}
A_n&=\Big(1+\mathcal{O}\Big(\lambda_1^n + \sup_{m\geq n+1}|E_m|\Big) \Big) \\
&\hspace*{2cm} \cdot \exp\Bigg(\frac{\lambda_2^n}{\lambda_2 - \lambda_1} \Big(\log(2) +\sum_{i=2}^{\infty} E_i \cdot \lambda_2^{-i+1} \Big) -\frac{1}{\lambda_2 - \lambda_1} \sum_{i=2}^n E_i \cdot \lambda_1^{n-i+1}\Bigg)\\
&=(1+o(1))\cdot \exp\Bigg(\frac{\lambda_2^n}{\lambda_2 - \lambda_1} \Big( \log(2) +\sum_{i=2}^{\infty} E_i \cdot \lambda_2^{-i+1} \Big) -\frac{1}{\lambda_2 - \lambda_1} \sum_{i=2}^n E_i \cdot \lambda_1^{n-i+1}\Bigg)
\end{align*}
as $n\to \infty$. Call $K_2$ the quantity
\begin{align*}
\exp\Bigg(\frac{1}{\lambda_2 - \lambda_1} \Big( \log(2) +\sum_{i=2}^{\infty} E_i \cdot \lambda_2^{-i+1} \Big) \Bigg)\,,
\end{align*}
and $K_1$ the quantity
\begin{align*}
\exp\Bigg( -\frac{1}{\lambda_2 - \lambda_1} \cdot \lim_{n\to \infty}\Bigg(\sum_{i=2}^n E_i \cdot \lambda_1^{n-i+1}\Big)\Bigg)\,.
\end{align*}
Thus,
\begin{align*}
A_n =\text{N}(f_n)=(1+o(1)) K_1 \cdot K_2^{\lambda_2^n}=(1+o(1)) K_1 \cdot K_2^{\big(\frac{1+\sqrt{5}}{2}\big)^n}
\end{align*}
as $n\to \infty$, where $K_1$ and $K_2$ can now be written as
\begin{align*}
K_2&=\exp\Bigg( \frac{1}{\sqrt{5}} \Bigg( \log(2) +\sum_{i=2}^{\infty} \Big(\frac{1+\sqrt{5}}{2} \Big)^{-i+1} \\
&\hspace*{2cm} \cdot \log\Big(1+\frac{1}{2\text{N}(f_{i-1})} -\frac{\text{N}(f_{i-2})}{2\text{N}(f_{i-1})}+\frac{1}{\text{N}(f_{i-1})\cdot \text{N}(f_{i-2})}\Big) \Bigg)\Bigg)
\end{align*}
and
\begin{align*}
K_1&=\exp\Bigg( -\frac{1}{\sqrt{5}} \cdot \lim_{n\to \infty}\Bigg(\sum_{i=2}^n \Big(\frac{1-\sqrt{5}}{2} \Big)^{n-i+1}\\
& \hspace*{2cm} \cdot \log\Big(1+\frac{1}{2\text{N}(f_{i-1})} -\frac{\text{N}(f_{i-2})}{2\text{N}(f_{i-1})}+\frac{1}{\text{N}(f_{i-1})\cdot \text{N}(f_{i-2})}\Big) \Bigg)\Bigg)\,.
\end{align*}
By (numerically) evaluating $K_1$ and $K_2$, we obtain that the number of nonisomorphic leaf-induced subtrees of the leaf-Fibonacci tree $f_n$ is asymptotically
\begin{align*}
1.00001887227319\cdots  (1.48369689570172\ldots)^{\big(\frac{1+\sqrt{5}}{2}\big)^n}
\end{align*}
as $n\to \infty$. This completes the proof of the theorem.
\end{proof}

\medskip
This asymptotic formula can also be written in terms of the Fibonacci number $F_n$: the number of leaves of $f_n$ is given by
\begin{align*}
|f_n|=F_{n+2}=\frac{1}{\sqrt{5}}\Bigg(\Big(\frac{1+\sqrt{5}}{2} \Big)^{2+n}-\Big(\frac{1-\sqrt{5}}{2} \Big)^{2+n} \Bigg)
\end{align*}
for every $n$; so we deduce that 
\begin{align*}
\frac{10}{5+3\sqrt{5}}\cdot |f_n| \thicksim \Big(\frac{1+\sqrt{5}}{2} \Big)^n
\end{align*}
as $n \to \infty$. This implies that
\begin{align*}
\text{N}(f_n)& \thicksim K_1 \cdot K_2^{\frac{10}{5+3\sqrt{5}}\cdot |f_n|} \\
&= 1.00001887227319\cdots  (1.48369689570172\ldots)^{\frac{-5+3\sqrt{5}}{2}\cdot |f_n|}
\end{align*}
as $n \to \infty$. 

\section*{Acknowledgments}
The author thanks Stephan Wagner for a suggestion in the proof of the asymptotic formula.

\end{document}